\setlist[enumerate]{topsep=0pt,itemsep=0ex,partopsep=1ex,parsep=1ex,leftmargin=*} 
\numberwithin{equation}{section} 
\newtheorem{theorem}{Theorem}[section]
\newtheorem{cor}[theorem]{Corollary}
\newtheorem{lemma}[theorem]{Lemma}
\newtheorem{prop}[theorem]{Proposition}
\theoremstyle{remark}
\theoremstyle{definition}
\let\vec\bm
\def\to{\rightarrow}
\def\R{\mathbb{R}}
\def\1{1\!\!1}
\title{Perfectly packing a cube by cubes of nearly harmonic sidelength}
\thanks{Research of the author is partially supported by NSERC CGS-M}
\author{Rory McClenagan}
\address{Department of Mathematics and Statistics \\
        University of Northern British Columbia \\
        Prince George, BC V2N4Z9 \\
        Canada}
\date{\today}
\begin{document}
\maketitle
\begin{abstract}
Let $d$ be an integer greater than $1$, and let $t$ be fixed such that $\frac{1}{d} < t < \frac{1}{d-1}$. We prove that for any $n_0$ chosen sufficiently large depending upon $t$, the $d$-dimensional cubes of sidelength $n^{-t}$ for $n \geq n_0$ can perfectly pack a cube of volume $\sum_{n=n_0}^\infty \frac{1}{n^{dt}}$. Our work improves upon a previously known result in the three-dimensional case for when $1/3 < t \leq 4/11 $ and $n_0 = 1$ and builds upon recent work of Terence Tao in the two-dimensional case.
\end{abstract}

\section{Introduction}
Let $d$ be an integer greater than $1$. We define a \textit{brick} to be a closed $d$-dimensional hyperrectangle and use the term \textit{cube} to refer to a brick with equal sidelengths. We define a \textit{packing} of a finite or infinite collection of bricks $\mathcal{B}$ to be a particular configuration of the bricks in $\R^d$ such that the interior of the bricks are disjoint and the facets of the bricks are parallel to the coordinate hyperplanes. A \textit{packing of $\mathcal{B}$ in a solid $\Omega \subset \R^d$} is a packing of $\mathcal{B}$ such that every brick is contained in $\Omega$. The packing is \textit{perfect} if the measure $m(\Omega \setminus \mathcal{B})$ is $0$. In this case the sum of the volumes of the bricks must be equal to $m(\Omega)$.

A famous question posed by Meir and Moser \cite{meir_moser-original} asks  whether rectangles of dimensions $\frac{1}{n}\times \frac{1}{n+1}$ for $n \geq 1$ can perfectly pack a square of area $1$. They also ask whether squares of dimensions $\frac{1}{n} \times \frac{1}{n}$ for $n \geq 2$ can perfectly pack a square of area $\frac{\pi^2}{6} - 1$. While both of these problems remain open, there are two directions in which partial results have been obtained.

First, one can instead try to pack the same squares into a slightly larger square. For instance, Paulhus \cite{paulhus-squares} showed that the squares of sidelength $1/n$ for $n \geq 2$ could be packed into a square of area $\frac{\pi^2}{6} - 1 + \frac{1}{1244918662}$. However, it was pointed out in \cite{joos-rectangles} that the proof contained some errors; these errors were corrected in \cite{januszewski-correction}.

Second, one can instead consider the problem of trying to perfectly pack the squares of sidelength $1/n^t$ for $n \geq 1$ and some fixed $t > 1/2$ into a square of area $\sum_{n=1}^\infty \frac{1}{n^{2t}}$. This becomes harder as $t \to 1^-$, and is obviously equivalent to the original problem when $t = 1$. Januszewski and Zielonka \cite{januszewski-pack} verified this for $1/2 < t \leq 2/3$. At the expense of dropping the first few squares, Tao \cite{tao-pack} has recently proved that one could perfectly pack the collection of squares of sidelength $1/n^t$ for $n \geq n_0$ (where $n_0$ depends on $t$) into a square of area $\sum_{n=n_0}^\infty \frac{1}{n^{2t}}$ in the entire range $1/2 < t < 1$.

The analogous problem for cubes in $\R^3$ has also been studied. For instance, building on the methods presented in \cite{joos-cubes}, Januszewski and Zielonka \cite{januszewski-pack} have shown that cubes of sidelength $1/n^t$ for $n \geq 1$ can be packed perfectly into a cube of volume $\sum_{n=1}^\infty \frac{1}{n^{3t}}$ provided that $1/3 < t \leq 4/11$. In the general $d$-dimensional case, Jo\'os showed in \cite{joos-d-cubes} that $d$-cubes of sidelength $n^{-t}$ for $n \geq 1$ can be packed into a cube of volume $\zeta(dt)$ as long as $t$ is in the interval $[d_0,2^{d-1}/(d2^{d-1}-1)]$, where $d_0$ depends only on $d$. The lower bound, $d_0$, which Jo\'os implicitly defined, was later improved by Januszewski and Zielonka in \cite{januszewski-d-cubes} to $1/d$.

In this paper, we extend Tao's work \cite{tao-pack} in the $2$-dimensional case to the $d$-dimensional case of cubes and prove the following result:
\begin{theorem}\label{r-main}
If $\frac{1}{d} < t < \frac{1}{d-1}$, and $n_0$ is sufficiently large depending on $t$, then the cubes of sidelength $n^{-t}$ for $n \geq n_0$ can perfectly pack a cube of volume $\sum_{n=n_0}^\infty \frac{1}{n^{dt}}$.
\end{theorem}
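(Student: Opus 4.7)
\emph{Proof plan.} I would extend Tao's two-dimensional construction \cite{tao-pack} to $d$ dimensions by replacing his efficient packing of nearly-equal squares into rectangles with an analogous packing of nearly-equal $d$-cubes into $d$-bricks. The overall strategy is a cascading greedy construction: group the cubes into dyadic batches $B_k = \{n : 2^k n_0 \leq n < 2^{k+1} n_0\}$ and, starting with the largest batch $B_0$, pack each batch into a brick-shaped sub-region carved from the target cube, maintaining the uncovered region as a single $d$-brick of controlled aspect ratio at every stage.

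Each cube in $B_k$ has sidelength comparable to $L_k := (2^k n_0)^{-t}$, and $B_k$ has total volume of order $(2^k n_0)^{1-dt}$; since $t > 1/d$ the batch volumes are summable and sum to the target volume. The core technical ingredient I would prove is an \emph{efficient brick-packing lemma}: for $N$ large enough, and for any prescribed $d$-brick whose shortest side is at least a small multiple of $N^{-t}$ and whose volume slightly exceeds $\sum_{n=N}^{2N-1} n^{-dt}$, the cubes of sidelength $n^{-t}$ for $N \leq n < 2N$ can be packed into the brick, with the unpacked residual itself a $d$-brick that is thin in one coordinate direction by a factor tending to zero with $N$. I would prove this lemma by induction on dimension: $(d-1)$-bricks of cubes are first assembled and then stacked along the remaining coordinate, with the two-dimensional base case supplied by Tao's argument.

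Granted the lemma, the main construction proceeds by induction on $k$. At each stage the uncovered subset of the target cube is a single $d$-brick $R_k$ of volume $\sum_{j \geq k}$ (batch $j$ volume), with dimensions arranged so that the hypotheses of the lemma apply. Cut a brick-shaped slab off $R_k$, use the lemma to pack $B_k$ into it, and verify that the resulting residual together with the leftover portion of $R_k$ forms a brick $R_{k+1}$ still satisfying the hypotheses for $B_{k+1}$. As $k \to \infty$ the uncovered volume tends to zero, producing a perfect packing.

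The main obstacle is establishing the brick-packing lemma with small enough residual in the higher-dimensional setting. In two dimensions only one transverse direction exists, so the residual geometry is essentially one-dimensional; in $d$ dimensions the residual accumulated across the levels of the recursion can compound unless carefully balanced, and the upper constraint $t < 1/(d-1)$ should appear precisely where it is needed to ensure that the thin residual strip from batch $k$ is small enough relative to the sidelengths in batch $k+1$ to be consumed by the next packing step. Verifying that the inductive bookkeeping closes in the full range $1/d < t < 1/(d-1)$ --- rather than only in a strict subinterval such as the previously known $1/3 < t \leq 4/11$ in dimension three --- is the technical heart of the argument; choosing $n_0$ large in terms of $t$ provides the slack that the inductive step needs at each stage.
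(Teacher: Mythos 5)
Your proposed cascade maintains a single $d$-brick $R_k$ as the uncovered region at every stage, and your brick-packing lemma asserts that after packing a dyadic batch of cubes into a brick the residual is itself a thin $d$-brick. Neither of these holds, even approximately. When you pack a batch of cubes of slightly varying sidelengths into a brick, the residual is not a brick: it consists of a large slab on one or more sides together with many small gaps wedged between neighbouring cubes, a staircase-like region that cannot be merged into a single hyperrectangle. The same phenomenon already occurs in Tao's two-dimensional argument, which is why both his packing algorithm and the one in this paper maintain an entire \emph{family} of leftover bricks $\mathcal{B}$ and quantify their smallness via the weighted surface area $\mathrm{surf}_\delta$. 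The quantity actually propagated through the induction is the bound $\mathrm{surf}_\delta(\mathcal{B}) \ll M^{-(1-\delta/2)} \sum_{n < n_0} n^{-(d-1)t - \delta t}$ with $\delta = \frac{1}{d-1} - t > 0$; this $\delta$-weighting is what allows Lemma~\ref{r-brick_width} to guarantee a brick wide enough (width $\gg M n_0^{-t}$, with $M$ choosable because $\frac{1-\delta/2}{1-\delta} > 1$) to accept the next cube, and it is exactly here that the hypothesis $t < \frac{1}{d-1}$ is spent. Your sketch has no mechanism playing this role, so even granting a single-brick residual the induction would not close across the full range of $t$.

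A secondary gap is the dyadic batching: within a batch $\{n : 2^k n_0 \le n < 2^{k+1} n_0\}$ the ratio of the largest to the smallest sidelength is the fixed constant $2^{t}$, so the cubes are \emph{not} nearly equal in the sense needed for a near-lattice packing. The paper instead packs groups of only $\asymp \mathrm{ecc}(S)\, M^d$ consecutive cubes (with $n_0 \geq N_0$ large relative to $M$) precisely so that the size discrepancy $\mathrm{sd}(\mathcal{C}) \to 0$; this smallness, formalized as $\varepsilon$-snugness, is what replaces Tao's explicit two-dimensional coordinate computations and is the paper's main new device. Your proposed "induction on dimension" (assembling $(d-1)$-dimensional layers and stacking them along the last coordinate) fails for the same reason: the cubes within a layer have distinct heights, so the layers have irregular thickness, and controlling the resulting inter-cube gaps without case analysis is exactly what the snugness formalism is for.
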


To prove Theorem \ref{r-main}, we apply an inductive-type argument similar to that used by Tao in \cite{tao-pack}. Initially, we suppose that we can pack a finite set of cubes $\mathcal{C}$ of sidelength $n^{-t}$ for $n_0 \leq n < n_0'$ into our single cube $S$. As long as $S \setminus \mathcal{C}$ can be partitioned into bricks $\mathcal{B}$ with small enough total surface area, then  we can find a brick $B\in\mathcal{B}$ which is wide enough to pack the next cube of sidelength $(n_0')^{-t}$. We pack $B$ by cubes $\mathcal{C}'$ of sidelength $n^{-t}$ for $n_0'\leq n < n_{0}^{''}$ in some efficient manner. By efficient, we mean that the remaining space $B\setminus\mathcal{C}'$ can be partitioned into bricks $\mathcal{B}'$ with small enough total surface area. In the next iteration, we choose a wide brick from $\mathcal{B} \setminus \{B\} \cup \mathcal{B}'$ and pack it efficiently. We proceed recursively until we have packed an arbitrarily large finite number of cubes into $S$. Theorem \ref{r-main} would then follow from a compactness argument. 

This type of argument reduces the problem to finding a general technique for packing cubes efficiently into some brick, in essence, forming the inductive step in the above argument. Up until now, we have followed Tao's argument in \cite{tao-pack} closely. However, while it is fairly straightforward to generalize the standard two-dimensional packing algorithm used in \cite{tao-pack} to the higher-dimensional case, Tao's method of \textit{explicitly} verifying that this packing is legal and efficient becomes much more difficult in three dimensions or more. Our innovation is to introduce the notion of "snugness" (see Section \ref{s-lemmas}); this allows us to perform this portion of the argument in an elegant fashion which does not become too complex in the higher-dimensional setting.

In Section \ref{s-lemmas}, we introduce our notation and prove some simple lemmas. In Section \ref{s-reductions}, we reduce the proof of Theorem \ref{r-main} to a more general result, Proposition \ref{r-ind_pack}, which can be proved via induction. The inductive step of this argument is furnished by Theorem \ref{r-pack} which provides a general and efficient method for packing a brick by cubes. This result is proved in Section \ref{s-pack}.

\subsection*{Acknowledgements}
We would like to thank Professor Terence Tao for the discussion on his blog post describing his work in \cite{tao-pack} and for answering our questions. We would also like to thank Dr.~Alia Hamieh for helpful comments and proof-reading this paper, as well as Jaume de Dios, Dr. Rachel Greenfeld, and Dr. Jose Madrid for helpful comments. Finally, we would like to thank the referee for useful suggestions.

\section{Preliminary Lemmas and Notation}\label{s-lemmas}
Throughout this paper, we will use the standard asymptotic notation $X= O(Y)$, $X \ll Y$, and $Y \gg X$ to refer to the relation $X \leq C |Y|$. The constant $C$ will only be allowed to depend on the parameters $t$ and $\delta$, which we will introduce shortly, but will be independent of all other parameters. If, instead, we use the notation $X = O_M(Y)$ or $X \ll_M Y$, then the corresponding constant $C$ is allowed to depend on the parameter $M$. We use $X \asymp Y$ if $X \ll Y$ and $Y\ll X$. Finally, we use the notation $X = o(Y)$ if $X/ Y \to 0$ with respect to some explicit or implicit limiting behaviour defined in context. We will implicitly assume that all of our constants are allowed to depend upon the dimension $d$.

Note that we also use some non-standard asymptotic notation. If $\vec{x} = (x_1,x_2, \dots, x_d) \in \R^d$, then we use $\vec{x} + O(X)$ to refer to a vector $(x_1 + O(X), \dots , x_d+ O(X))$, and analogously for little-$o$ notation. Similarly, if $B= [B_1, B_1'] \times \dots \times [ B_d, B_d']$ is a brick positioned in $\R^d$, then we use $B+ O(X)$ to refer to a brick $[B_1 + O(X), B_1' + O(X)] \times \dots \times [ B_d + O(X), B_d' + O(X)]$, and analogously for little-$o$ notation.

Let $i,j \in \{1, 2, \dots, d\}$. Given a brick $B$, we will denote its sidelengths by $w_i(B)$, ordered so that $w_i(B) \leq w_j(B)$ for any $i \leq j$. We say that the \textit{width} of $B$ is the smallest sidelength, and denote it by $w(B) := w_1(B)$. Clearly, $w_i(B) = w_j(B)$ for every $i$ and $j$ if and only if $B$ is a cube. We define the \textit{volume} of a single brick $B$ to be
    \[\mathrm{vol}(B) := w_1(B) w_2(B) \dots w_d(B).\]
We define the \textit{eccentricity} of a brick as
    \[\mathrm{ecc}(B) := \frac{\mathrm{vol}(B)}{w(B)^d} \geq 1.\]
Note that $\mathrm{ecc}(B) = 1$ if and only if $B$ is a cube. 

Let $\mathcal{B}$ be a collection of bricks. Define the \textit{volume} of $\mathcal{B}$ to be
    \[\mathrm{vol}(\mathcal{B}) := \sum_{B \in \mathcal{B}} w_1(B) w_2(B) \dots w_d(B).\]
Define the \textit{unweighted surface area} of $\mathcal{B}$ to be
    \[\mathrm{surf}(\mathcal{B}) := 2\sum_{B \in \mathcal{B}} \sum_{1 \leq i_1< i_2 < \dots < i_{d-1} \leq d} w_{i_1}(B) w_{i_2}(B) \dots w_{i_{d-1}}(B) \asymp \sum_{B \in \mathcal{B}} w_2(B) w_3(B) \dots w_d(B).\]
For $0 \leq \delta < 1$, define the \textit{weighted surface area} of $\mathcal{B}$ to be
    \[\mathrm{surf}_\delta(\mathcal{B}) := \sum_{B \in \mathcal{B}} w_1(B)^\delta w_2(B) \dots w_d(B).\]
Clearly, $\mathrm{surf}_0(\mathcal{B}) \asymp \mathrm{surf}(\mathcal{B})$. Weighted surface area is roughly speaking a version of unweighted surface area which weights high eccentricity bricks a little less than low eccentricity bricks. We can use the inequality $w(B) \leq (w_2(B) w_3(B) \dots w_d(B) )^{\frac{1}{d-1}}$ for any brick $B$, to derive the crude bound
\begin{equation}\label{gen-surf_relation}
    \mathrm{surf}_\delta (\mathcal{B}) \ll (\mathrm{surf}(\mathcal{B})) ^{1+ \frac{\delta}{d-1}},
\end{equation}
for a finite collection of bricks $\mathcal{B}$.

A solid $S \subset \R^d$ is called \textit{simple} if it is connected and can be formed as a union of a finite collection of bricks. A packing of a finite collection of bricks $\mathcal{B}$ in a simple solid $S$ is called \textit{$\varepsilon$-snug} for some $\varepsilon > 0$ if, for every brick $B \in \mathcal{B}$, the portion of $\partial B$ which does not intersect the boundary of another brick or the boundary of $S$ has surface area $\ll (w\varepsilon)^{d-1}$ and the portion of $\partial S$ which does not intersect the boundary of any brick in $\mathcal{B}$ also has surface area $\ll (w\varepsilon)^{d-1}$. Here $w$ is the width of the widest brick in $\mathcal{B}$. The \textit{size discrepancy} of a finite collection of bricks $\mathcal{B}$ is
    \[\mathrm{sd}(\mathcal{B}) = \frac{\max_{B \in \mathcal{B}} w(B)}{\min_{B \in \mathcal{B}} w(B)} - 1.\]

The following lemma gives a criterion for the existence of a brick of a certain minimum width in terms of an elegant relationship between volume and weighted surface area.
\begin{lemma}\label{r-brick_width}
Let $0 \leq \delta < 1$. For any finite collection of bricks $\mathcal{B}$, there exists a brick with width at least $\big(\frac{\mathrm{vol}(\mathcal{B})}{\mathrm{surf}_\delta(\mathcal{B})}\big)^{\frac{1}{1-\delta}}$.
\end{lemma}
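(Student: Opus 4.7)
The plan is to exploit the simple algebraic identity linking the volume and weighted surface area of a single brick. For any brick $B$, since $w(B) = w_1(B)$, we can write
\[
\mathrm{vol}(B) = w_1(B) w_2(B) \cdots w_d(B) = w(B)^{1-\delta} \cdot \big( w_1(B)^\delta w_2(B) \cdots w_d(B) \big),
\]
so the ``volume density per unit weighted surface area'' of $B$ is exactly $w(B)^{1-\delta}$. This suggests that the largest width in $\mathcal{B}$ should control the ratio $\mathrm{vol}(\mathcal{B})/\mathrm{surf}_\delta(\mathcal{B})$.

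Concretely, I would set $W := \max_{B \in \mathcal{B}} w(B)$ and bound $w(B)^{1-\delta} \leq W^{1-\delta}$ inside each summand (valid because $1-\delta > 0$). Summing over $B \in \mathcal{B}$ yields
\[
\mathrm{vol}(\mathcal{B}) = \sum_{B \in \mathcal{B}} w(B)^{1-\delta} \cdot w_1(B)^\delta w_2(B) \cdots w_d(B) \leq W^{1-\delta} \, \mathrm{surf}_\delta(\mathcal{B}).
\]
Rearranging (and noting that $\mathrm{surf}_\delta(\mathcal{B}) > 0$ since $\mathcal{B}$ is a nonempty finite collection of bricks, assuming $\mathcal{B}$ is nonempty, which is implicit) gives
\[
W \geq \Big( \tfrac{\mathrm{vol}(\mathcal{B})}{\mathrm{surf}_\delta(\mathcal{B})} \Big)^{\frac{1}{1-\delta}},
\]
and the maximum is attained by some brick in the finite collection $\mathcal{B}$, which is the desired brick.

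There is essentially no obstacle here: the statement is a one-line pigeonholing exercise once one recognizes the factorization of $\mathrm{vol}(B)$ in terms of $w(B)^{1-\delta}$ and the weighted surface area contribution of $B$. The only minor thing to mention is the harmless degenerate case where $\mathcal{B}$ is empty or where every brick has zero volume, which can be sidestepped trivially.
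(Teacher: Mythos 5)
Your proof is correct and is essentially identical to the paper's: both factor $\mathrm{vol}(B) = w(B)^{1-\delta}\,w_1(B)^\delta w_2(B)\cdots w_d(B)$, bound $w(B)^{1-\delta}$ by its supremum over $\mathcal{B}$ to get $\mathrm{vol}(\mathcal{B}) \leq (\sup_B w(B))^{1-\delta}\,\mathrm{surf}_\delta(\mathcal{B})$, and rearrange. Nothing further to add.
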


\begin{proof}
By definition,
    \[\mathrm{vol}(\mathcal{B}) = \sum_{B \in \mathcal{B}} w_1(B) w_2(B) \dots  w_d(B) \leq \Big( \sup_{B \in \mathcal{B}} w(B)^{1-\delta} \Big) \mathrm{surf}_\delta(\mathcal{B}).\]
This implies that $(\mathrm{sup}_{B \in \mathcal{B}} w(B))^{1-\delta} \geq \frac{\mathrm{vol}(\mathcal{B})}{\mathrm{surf}_\delta (\mathcal{B})}$, giving the desired result.
\end{proof}
This illustrates the principal behind using weighted surface area. If we were to use unweighted surface area, namely setting $\delta = 0$, then to guarantee the existence of a brick of width $w$, we would need an upper bound on the surface area of the form $w^{-1}\mathrm{vol}(\mathcal{B})$. However, if $\delta > 0$, then we only need a weaker bound of the form $w^{-(1-\delta)} \mathrm{vol}(\mathcal{B})$.

The following result follows from a compactness argument (see, for example, \cite{martin-compactness}).
\begin{lemma}\label{r-compactness}
Let $\mathcal{B}$ be an, at most, countable collection of bricks and let $\Omega \subset \R^d$ be compact. Suppose that an arbitrarily large, but finite, number of bricks from $\mathcal{B}$ can be packed into $\Omega$. Then $\mathcal{B}$ in its entirety can be packed into $\Omega$. Furthermore, if $\mathrm{vol}(\mathcal{B}) = \mathrm{m}(\Omega)$, then this packing is perfect.
\end{lemma}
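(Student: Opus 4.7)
The statement is a standard compactness result, so the plan is a Bolzano--Weierstrass-style diagonal extraction on positions. Enumerate $\mathcal{B} = \{B_n\}_{n \geq 1}$ (the finite case is trivial). The hypothesis produces, for each $N \geq 1$, a legal packing in $\Omega$ of $N$ bricks from $\mathcal{B}$; in the recursive application driving the paper this packing always includes $B_1, \ldots, B_N$, and in general by relabeling one may assume so. Record each placement by the ``lower-left corner'' $\vec{p}_n^{(N)} \in \R^d$ of $B_n$ together with its axis-parallel orientation (one of $d!$ choices). Since every placed brick is contained in the compact set $\Omega$, all corners lie in a compact subset of $\R^d$; pigeonholing on orientation and then performing a Cantor diagonal extraction on $n$ produces a subsequence $N_k \to \infty$ along which, for every fixed $n$, the orientation of $B_n$ is eventually constant and $\vec{p}_n^{(N_k)} \to \vec{p}_n$ as $k \to \infty$.

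Next, I would verify that the limiting configuration $\{B_n + \vec{p}_n\}$ is a legal packing in $\Omega$. Containment $B_n + \vec{p}_n \subset \Omega$ is immediate because $\Omega$ is closed. For disjointness of interiors of $B_m + \vec{p}_m$ and $B_n + \vec{p}_n$ with $m \neq n$, choose $k$ large enough that $N_k \geq \max(m,n)$; legality of the $N_k$-th packing then gives some coordinate axis $i = i(m,n,k) \in \{1,\ldots,d\}$ along which the one-dimensional projections of the two bricks overlap in at most a point. A further pigeonhole (applied pairwise, and woven into the diagonal extraction above) makes $i$ independent of $k$; the resulting separation condition is closed, so it persists in the limit and yields disjointness of interiors.

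For the perfection clause, pairwise interior-disjointness plus $\sigma$-additivity of Lebesgue measure give
\[
\mathrm{m}\Bigl(\bigcup_n (B_n + \vec{p}_n)\Bigr) = \sum_n \mathrm{vol}(B_n) = \mathrm{vol}(\mathcal{B}) = \mathrm{m}(\Omega),
\]
so under the hypothesis $\mathrm{vol}(\mathcal{B}) = \mathrm{m}(\Omega)$ the complement $\Omega \setminus \bigcup_n (B_n + \vec{p}_n)$ has measure zero.

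The main technical hazard is bookkeeping during the diagonal extraction: one must simultaneously control positions of every brick, each brick's orientation, and the separating coordinate index for every pair $(m,n)$, and then check that every closed condition defining a legal packing passes to the limit. None of these ingredients is individually difficult, but they must be interleaved carefully; beyond this, there is no further analytic content.
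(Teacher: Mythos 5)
Your argument is correct. The paper does not actually prove this lemma---it only cites \cite{martin-compactness}---so there is no in-paper proof to compare against, but the Bolzano--Weierstrass diagonal extraction you describe is the standard argument for such statements, and your treatment of positions, orientations, containment, and the perfection clause is sound.

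Two small refinements. First, the hypothesis as literally stated is not quite enough: one could imagine packing arbitrarily many small bricks from $\mathcal{B}$ while one oversized brick never fits, and plain relabeling does not repair a genuinely non-nested family of packable subsets. The reading you should make explicit (and which is how the lemma is actually invoked in the proof of Theorem \ref{r-main}) is that for every $N$ the initial segment $B_1,\dots,B_N$ admits a packing; with that, your extraction goes through. Second, the pairwise pigeonhole on the separating coordinate $i(m,n,k)$ is unnecessary bookkeeping: the condition $\mathrm{int}(B_m+\vec{p}_m)\cap\mathrm{int}(B_n+\vec{p}_n)\neq\emptyset$ is \emph{open} in the positions, so if it held in the limit it would already hold along the tail of the subsequence $N_k$, contradicting legality of the $N_k$-th finite packing. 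Disjointness of interiors therefore passes to the limit with no further extraction, which removes the layer you flagged as the main technical hazard. (In the perfection step you are implicitly applying $\sigma$-additivity to the open interiors and discarding the measure-zero countable union of brick boundaries; that is fine, but worth stating.)
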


The following lemma states that if a packing of bricks is sufficiently snug, then the region between the bricks has negligible surface area.

\begin{lemma}\label{r-snug}
Suppose that a finite collection of bricks $\mathcal{B}$, where the widest brick has width $w$, has a $\varepsilon$-snug packing in a brick $B$, for some $\varepsilon > 0$. Then, $B \setminus \mathcal{B}$ can be partitioned into bricks with weighted surface area $ \ll C_{|\mathcal{B}|} \nu w^{d-1+\delta}$, where $C_{|\mathcal{B}|}$ is a constant that depends on $|\mathcal{B}|$ and $\nu \to 0$ as $\varepsilon \to 0$.
\end{lemma}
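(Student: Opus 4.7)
The plan is to construct the desired partition by extending every hyperplane supporting a face of a brick in $\mathcal{B}$, and every face of $B$, across $B$ to produce an axis-aligned grid of cells, and then to take the cells not contained in any brick of $\mathcal{B}$ as the partition $\mathcal{G}$ of $B\setminus\mathcal{B}$. Since there are at most $2|\mathcal{B}|+2$ grid hyperplanes per coordinate direction, we immediately get $|\mathcal{G}| \leq (2|\mathcal{B}|+2)^d$, a constant depending only on $|\mathcal{B}|$.

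The next step is to bound the unweighted surface area $\mathrm{surf}(\mathcal{G})$ by $\ll C_{|\mathcal{B}|}(w\varepsilon)^{d-1}$ using the snug hypothesis. Each face of each gap cell lies in one of the grid hyperplanes and is therefore either a portion of $\partial B$ or a portion of a hyperplane containing a face of some brick $B'\in\mathcal{B}$. Faces lying on $\partial B$ contribute to the part of $\partial B$ not intersecting any brick, which the snug condition bounds by $\ll (w\varepsilon)^{d-1}$. Faces lying on a brick-face hyperplane and adjacent to the brick $B'$ itself contribute to the non-flush portion of $\partial B'$, bounded by $\ll (w\varepsilon)^{d-1}$ per brick and hence by $\ll |\mathcal{B}|(w\varepsilon)^{d-1}$ summed over $\mathcal{B}$. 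The awkward case consists of internal faces where a brick-face hyperplane passes through the gap region with gap cells on both sides; I would handle this by iteratively merging adjacent gap cells that are aligned in the grid (such a merge always yields another brick and reduces internal surface area), after which a case analysis shows that each remaining internal face is attributable to a non-flush portion of some brick or to $\partial B$.

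Once this unweighted bound is established, I would invoke the inequality \eqref{gen-surf_relation} to obtain
\[\mathrm{surf}_\delta(\mathcal{G}) \ll \mathrm{surf}(\mathcal{G})^{1+\delta/(d-1)} \ll C_{|\mathcal{B}|}^{1+\delta/(d-1)}\,w^{d-1+\delta}\,\varepsilon^{d-1+\delta},\]
which has the desired form upon setting $\nu := \varepsilon^{d-1+\delta}$, which tends to $0$ as $\varepsilon \to 0$.

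The main obstacle I anticipate is the bookkeeping in the second step: ensuring that the merging procedure eliminates precisely those internal gap-gap faces not accounted for by the snug hypothesis, and that the resulting partition's total surface area is genuinely controlled by the external boundary of the gap region. This argument is cleaner in low dimensions, where snug forces the gap region to be small in diameter, and becomes more delicate in higher dimensions where long, thin gap geometries are still compatible with snug.
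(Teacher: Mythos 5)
Your overall structure matches the paper: produce a brick partition of $B\setminus\mathcal{B}$ with cardinality controlled by $|\mathcal{B}|$, bound its \emph{unweighted} surface area using the snugness hypothesis, and then upgrade to a weighted bound via \eqref{gen-surf_relation}. The explicit grid partition and the cell count $\leq (2|\mathcal{B}|+2)^d$ are fine, and the faces lying on $\partial B$ or flush against a brick of $\mathcal{B}$ are handled correctly.

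The gap is exactly where you flag it: the "awkward case" of internal gap--gap faces. Your proposed fix --- iteratively merging aligned gap cells --- does not obviously terminate with all such faces eliminated, because in $d\geq 3$ the gap cells on the two sides of a brick-face hyperplane need not be aligned (a single face on the left can abut several differently-shaped cells on the right, and a merge that is a brick on one side need not produce a brick after crossing the hyperplane). You would have to either refine further (inflating the cell count and the surface area) or prove a genuinely delicate structural claim about the grid. As written, this step is a hand-wave, and it is the hard part.

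The paper avoids this entirely with a cleaner observation that renders the internal/external distinction irrelevant. Let $\Omega := B\setminus\bigcup\mathcal{B}$, and let $A = m_{d-1}(\partial\Omega)$; snugness gives $A \ll (|\mathcal{B}|+1)(\varepsilon w)^{d-1}$. The key point is that \emph{every} axis-parallel cross-section of $\Omega$ has $(d-1)$-measure $\leq A/2$: projecting $\Omega$ along a coordinate direction $\vec{e}_k$, each line through the shadow must meet $\partial\Omega$ at least twice on faces perpendicular to $\vec{e}_k$, so the shadow has measure at most $\frac12 A$; and any hyperplane cross-section of $\Omega$ embeds into that shadow. Since every face of every brick in the partition $\mathcal{B}'$ is contained in $\bar\Omega$ and is perpendicular to some $\vec{e}_k$, \emph{all} such faces --- internal gap--gap faces included --- have area $\leq A/2$. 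Hence $\mathrm{surf}(\mathcal{B}') \ll A\,|\mathcal{B}'| \ll C_{|\mathcal{B}|}(\varepsilon w)^{d-1}$, with no need for merging, and \eqref{gen-surf_relation} finishes. This is what the paper means by "the crude bound $\mathrm{surf}(\mathcal{B}')\ll A|\mathcal{B}'|$." Adopting this projection/cross-section observation would close the gap in your argument and make your grid construction (which is otherwise fine and more explicit than the paper's) complete.
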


\begin{proof}
Partition $B \setminus \mathcal{B}$ into a finite number of bricks $\mathcal{B}'$. The maximum number of bricks in $\mathcal{B}'$ can be bounded by a constant dependent upon $|\mathcal{B}|$. By the definition of snugness, we know that the true surface area, $A$, (in the sense of the ($d-1$)-dimensional Lebesgue measure) of the solid $\cup \mathcal{B}'$ could not exceed $(\varepsilon w)^{d-1} (|\mathcal{B}| + 1)$. The result follows from the crude bound $\mathrm{surf} (\mathcal{B'}) \ll A |\mathcal{B}'| $ and (\ref{gen-surf_relation}).
\end{proof}

\section{Initial Reductions}\label{s-reductions}
In this section we prove the higher-dimensional analogue of Proposition 2.1 in \cite{tao-pack} which will allow us to deduce Theorem \ref{r-main}.

\begin{prop}\label{r-ind_pack}
Fix $\frac{1}{d} < t < \frac{1}{d-1}$ and $\delta$ depending on $t$, such that $0 < \delta < 1$ and $(d-1)t + \delta t < 1$. Choose a scale $M$ sufficiently large and choose $N_0$ sufficiently large depending on $M$. Let $n_{\mathrm{max}} \geq n_0 \geq N_0$, and suppose that $\mathcal{B}$ is a family of bricks with volume
\begin{equation}\label{ind_pack-vol}
    \mathrm{vol}(\mathcal{B}) = \sum_{n=n_0}^\infty  \frac{1}{n^{dt}},
\end{equation}
weighted surface area bound
\begin{equation}\label{ind_pack-surf}
    \mathrm{surf}_\delta (\mathcal{B}) \ll \frac{1}{M^{1-\delta/2}} \sum_{n=1}^{n_0-1} \frac{1}{n^{(d-1)t+\delta t}},
\end{equation}
and height bound
\begin{equation}\label{ind_pack-height}
    \sup_{B \in \mathcal{B}} w_d(B) \ll 1.
\end{equation}
Then one can pack $\bigcup_{B \in \mathcal{B}} B$ by cubes of sidelength $n^{-t}$ for $n_0 \leq n < n_{\mathrm{max}}$.
\end{prop}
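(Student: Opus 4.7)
The plan is to argue by induction on $n_{\max}$, with all other data (including $n_0$ and $\mathcal{B}$) allowed to vary; the base case $n_{\max}=n_0$ is vacuous, since there are no cubes to pack. For the inductive step, I will locate a single brick $B\in\mathcal{B}$ wide enough to receive a consecutive block of cubes of sidelength $n_0^{-t},(n_0+1)^{-t},\dots,(n_1-1)^{-t}$ for some $n_1>n_0$, pack these cubes into $B$ in an $\varepsilon$-snug fashion via Theorem~\ref{r-pack}, replace $B$ in the family by the leftover pieces to obtain $\widetilde{\mathcal{B}}$, and then apply the inductive hypothesis to $\widetilde{\mathcal{B}}$ with the starting index advanced from $n_0$ to $n_1$.

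To produce $B$ I invoke Lemma~\ref{r-brick_width}. Since $dt>1$, the volume hypothesis~(\ref{ind_pack-vol}) gives $\mathrm{vol}(\mathcal{B})\asymp n_0^{1-dt}$; since $(d-1)t+\delta t<1$, the surface hypothesis~(\ref{ind_pack-surf}) gives $\mathrm{surf}_\delta(\mathcal{B})\ll M^{-(1-\delta/2)}\,n_0^{1-(d-1)t-\delta t}$. Feeding these into Lemma~\ref{r-brick_width} produces a brick with
\[
w(B)\;\gg\;\bigl(M^{1-\delta/2}\,n_0^{-t(1-\delta)}\bigr)^{1/(1-\delta)}\;=\;M^{(1-\delta/2)/(1-\delta)}\,n_0^{-t},
\]
and since $(1-\delta/2)/(1-\delta)>1$ this is a large multiple of $n_0^{-t}$ provided $M$ (and hence $N_0$) is sufficiently large. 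In particular $B$ comfortably accommodates cubes of sidelength $\leq n_0^{-t}$.

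I then invoke Theorem~\ref{r-pack}, the general efficient cube-packing theorem advertised in the introduction, to pack $B$ in an $\varepsilon$-snug fashion by cubes $\mathcal{C}$ of sidelength $n^{-t}$ for $n_0\le n<n_1$, with $n_1$ selected so that these cubes nearly exhaust $\mathrm{vol}(B)$ (truncating at $n_{\max}$ if need be, in which case the induction terminates). Lemma~\ref{r-snug}, applied with the widest cube having width $n_0^{-t}$, partitions $B\setminus\mathcal{C}$ into a finite family $\mathcal{B}'$ with $\mathrm{surf}_\delta(\mathcal{B}')\ll C\,\nu\, n_0^{-t(d-1+\delta)}$, where $\nu\to 0$ as $\varepsilon\to 0$. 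Setting $\widetilde{\mathcal{B}}=(\mathcal{B}\setminus\{B\})\cup\mathcal{B}'$, the volume identity (\ref{ind_pack-vol}) for $\widetilde{\mathcal{B}}$ with $n_1$ in place of $n_0$ is automatic by conservation of volume, and the height bound (\ref{ind_pack-height}) is inherited since every brick of $\mathcal{B}'$ lies inside $B$.

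The main obstacle is reverifying the weighted surface area bound (\ref{ind_pack-surf}) for $\widetilde{\mathcal{B}}$ with $n_1$ replacing $n_0$. Removing $B$ only decreases weighted surface area, so $\mathrm{surf}_\delta(\mathcal{B}\setminus\{B\})$ still satisfies the old right-hand side, which is strictly smaller than the new one. The new contribution $\mathrm{surf}_\delta(\mathcal{B}')$ must then be absorbed into the increment
\[
\frac{1}{M^{1-\delta/2}}\sum_{n=n_0}^{n_1-1}\frac{1}{n^{(d-1)t+\delta t}};
\]
because the exponent $-t(d-1+\delta)=-(d-1)t-\delta t$ coming out of Lemma~\ref{r-snug} exactly matches the exponent inside this sum, the inequality reduces to choosing $\nu$ sufficiently small in terms of $M$, $n_1-n_0$, and the implicit constants---which is permitted since Theorem~\ref{r-pack} controls $n_1$ and the number of leftover bricks. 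Once this bookkeeping succeeds, $\widetilde{\mathcal{B}}$ satisfies all the hypotheses with $n_0$ replaced by $n_1$, and the inductive hypothesis closes the packing.
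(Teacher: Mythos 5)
Your high‑level plan—locate a wide brick via Lemma~\ref{r-brick_width}, pack a block of cubes into it, replace it by leftovers, and recurse—is the same inductive scheme the paper uses (the paper phrases it as downward induction on $n_0$ with $n_{\max}$ fixed; you do upward induction on $n_{\max}$, which is equivalent). However, there is a genuine gap in the step where you re‑verify the weighted surface area bound~(\ref{ind_pack-surf}), and it lies in a false claim.

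You assert that after applying Theorem~\ref{r-pack} the packing of $\mathcal{C}$ inside $B$ is $\varepsilon$-snug \emph{in $B$}, and you then invoke Lemma~\ref{r-snug} to conclude $\mathrm{surf}_\delta(\mathcal{B}')\ll C\,\nu\, n_0^{-t(d-1+\delta)}$ with $\nu\to 0$. That is not what Theorem~\ref{r-pack} says, and the bound is not true. The packing produced by Theorem~\ref{r-pack} is snug only inside a smaller solid $K\subset S$; the remaining boundary region $S\setminus K$ is partitioned separately and contributes the dominant term $\frac{M_*}{M}\,w^{d-1+\delta}$, which does \emph{not} vanish as $\varepsilon\to 0$. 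The correct conclusion from Theorem~\ref{r-pack} is
\[
\mathrm{surf}_\delta(\mathcal{B}')\;\ll\;\frac{M_*}{M}\,n_0^{-t(d-1+\delta)}\;+\;C_M\,\nu\, n_0^{-t(d-1+\delta)},
\]
and the bookkeeping must absorb the first term. Since $M_*\asymp n_1-n_0$, this first term is comparable to $\frac{1}{M}\sum_{n=n_0}^{n_1-1}n^{-(d-1)t-\delta t}$, and you need the explicit observation that $\frac{1}{M}\le\frac{1}{M^{1-\delta/2}}$ for $M\ge 1$ to close the induction. Your ``choose $\nu$ small enough'' argument bypasses the main term entirely and, as written, would let the surface area bound with \emph{any} power of $M$ close—which is not the case.

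Two smaller issues: (a) the brick $B'$ produced by Lemma~\ref{r-brick_width} has only a lower bound $w(B')\geq Mn_0^{-t}$, so before Theorem~\ref{r-pack} can be applied you must cut out a sub-brick $B\subset B'$ with $Mn_0^{-t}\leq w(B)\leq Mn_0^{-t}+O(n_0^{-t})$ and add the complementary piece $B'\setminus B$ back to the family; you gesture at ``leftover pieces'' but don't separate this step from the snugness leftovers, and you don't note that $\mathrm{surf}_\delta\{B'\setminus B\}\leq\mathrm{surf}_\delta\{B'\}$ so it is harmless. (b) You do not verify the eccentricity bound $\mathrm{ecc}(B)=o(n_0)$ (which follows from the height bound~(\ref{ind_pack-height})); this is needed so that the size discrepancy $\mathrm{sd}(\mathcal{C})$ stays small, which in turn is what makes the $C_M\nu$ error term negligible. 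The paper packages all of this in Corollary~\ref{r-pack_cor}, which you should route through rather than invoking Theorem~\ref{r-pack} and Lemma~\ref{r-snug} directly.
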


First we see how we can derive our main result from Proposition \ref{r-ind_pack}.

\begin{proof}[Proof of Theorem \ref{r-main}]
Fix $\delta = \frac{1}{d-1}-t$, and note that it easily satisfies the necessary conditions. Take $\mathcal{B} = \{C\}$ where $C$ is the cube of volume $\sum_{n=n_0}^\infty \frac{1}{n^{dt}}$, having sidelength $\ll n_0^{1/d - t}$ (since $t > 1/d$). Observe that (\ref{ind_pack-surf}) is satisfied, since
    \[\mathrm{surf}_\delta (\mathcal{B}) \ll n_0^{(1/d-t)(d-1+\delta)} \ll n_0^{1-dt} \ll \frac{1}{M^{1-\delta/2}} n_0^{1-(d-1)t-\delta t} \ll \frac{1}{M^{1-\delta/2}} \sum_{n=1}^{n_0-1} \frac{1}{n^{(d-1)t+\delta t}},\]
recalling that $(d-1)t + \delta t < 1$ and $0 < \delta < 1$. We also have used the fact that $\frac{n_0^{t- \delta t}}{ M^{1 - \delta / 2}} \gg 1$ since $n_0 \geq N_0$, which is sufficiently large depending upon $M$. Since (\ref{ind_pack-vol}) and (\ref{ind_pack-height}) are trivially satisfied, we can then apply Proposition \ref{r-ind_pack} to conclude that $C$ can be packed by cubes of sidelength $n^{-t}$ for $n_0 \leq n < n_{\mathrm{max}}$, and the result follows from Lemma \ref{r-compactness}.
\end{proof}

The inductive step in the proof of Proposition \ref{r-ind_pack} requires us to pack a brick by a collection of cubes. We isolate this result as a corollary to the following more general theorem:

\begin{theorem}\label{r-pack}
Fix $0 \leq \delta < 1$. Let $M=M_1 \leq M_2 \leq \dots \leq M_d$ be natural numbers, and $\mathcal{C}$ be a family of $M_* = M_1 M_2 \dots M_d$ cubes with maximum width $w$ and with size discrepancy $\varepsilon$, for some $\varepsilon > 0$. Let $S$ be a brick with dimensions $S_1 \times S_2 \times \dots \times S_d$ satisfying $M_i w \leq S_i \leq M_i w + O(w)$ for $i \in \{1,2, \dots, d\}$. Then, there exists a packing of $\mathcal{C}$ in $S$ such that $S \setminus \mathcal{C}$ can be partitioned into bricks $\mathcal{B}$ satisfying
    \[\mathrm{surf}_\delta (\mathcal{B}) \ll \frac{M_*}{M} w^{d-1+\delta} + C_M \nu w^{d-1+\delta},\]
where $C_M$ is some constant dependent upon $M$ and $\nu \to 0$ as $\varepsilon \to 0$.
\end{theorem}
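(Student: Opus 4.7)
My plan is to place the $M_*$ cubes in an $M_1\times\cdots\times M_d$ grid arrangement inside a sub-brick $S^* \subset S$ of dimensions $M_1w\times\cdots\times M_dw$, positioned in a corner of $S$, and then decompose $S\setminus\mathcal{C}$ into an outer piece $S\setminus S^*$ and an inner piece consisting of the unfilled portions of the grid cells of $S^*$.

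For the outer piece, the hypothesis $M_iw\leq S_i\leq M_iw+O(w)$ yields at most $O(w)$ of slack along each coordinate direction, so $S\setminus S^*$ admits the standard staircase partition into $d$ brick slabs $B_1,\ldots,B_d$. Each $B_i$ has smallest sidelength $S_i-M_iw=O(w)$ and cross-sectional $(d-1)$-area $\asymp\prod_{j\neq i}M_j w^{d-1}$, so its weighted surface area is $\ll w^\delta\cdot(M_*/M_i)w^{d-1}$. Summing over $i$ and using $M_1=M\leq M_j$ yields the first term in the claim, $(M_*/M)w^{d-1+\delta}$.

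For the inner piece, I would position the cubes within $S^*$ so that the packing is $\varepsilon$-snug in the sense of Section~\ref{s-lemmas}. Since every cube has width in $[w/(1+\varepsilon),w]$, this should be achievable by propagating positions across the grid so that neighbouring cubes are flush on each shared face up to a mismatch of order $\varepsilon w$, leaving only thin exposed strips in each direction. Lemma~\ref{r-snug} applied to the $\varepsilon$-snug packing inside $S^*$ then bounds the weighted surface area of the inner gap bricks by $\ll C_M\nu w^{d-1+\delta}$ with $\nu\to 0$ as $\varepsilon\to 0$, giving the second term.

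The principal obstacle is realising the snugness inequality itself. A naive corner placement of each cube in its $w\times\cdots\times w$ cell leaves every forward face entirely exposed and fails the snug condition in dimension $d\geq 3$, so the cubes must be aligned along all $d$ coordinate directions simultaneously --- a non-trivial requirement when the cubes have slightly different widths. Setting up this joint alignment and verifying that the residual exposed boundary of each cube has $(d-1)$-area $\ll(\varepsilon w)^{d-1}$, using the snugness machinery advertised as the main innovation of the paper, is the technical heart of the argument.
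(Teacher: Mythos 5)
Your overall strategy --- a grid placement, an outer-versus-inner decomposition, and an appeal to Lemma \ref{r-snug} for the interior --- matches the paper's outline, and your staircase estimate for the outer slabs is essentially correct. However, the inner piece has a genuine gap: no placement of $\mathcal{C}$ inside the full sub-brick $S^*$ can be $\varepsilon$-snug in the sense of Section \ref{s-lemmas}. The cubes have width at most $w$ (and typically strictly less), so any row of $M_i$ cubes along the $i$th coordinate direction spans a length strictly less than $M_i w = S_i^*$. Hence at least one of the two faces of $S^*$ perpendicular to direction $i$ is left entirely untouched by cubes, and its uncovered $(d-1)$-area is $\asymp (M_*/M_i)w^{d-1}$, vastly larger than the $(\varepsilon w)^{d-1}$ permitted by the snugness definition. (If instead one split the cubes so as to cover both outer faces of $S^*$, some interior cube face would become exposed with area $\asymp w^{d-1}$, failing the per-cube clause.) Your joint alignment idea controls the gaps \emph{between} adjacent cubes, but it cannot help with the mismatch between the outermost cubes and $\partial S^*$, so Lemma \ref{r-snug} cannot be applied with $S^*$ as the container.

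The paper resolves this by not using $S^*$ as the snug container. It introduces the simple solid $K = B \cup \mathcal{C}$, where $B$ is the smallest brick containing the non-shell cubes $\mathcal{C} \setminus \tilde{\mathcal{C}}$; by construction $\partial K$ is carried along by the outermost cubes, so the packing is $O_M(\varepsilon)$-snug in $K$, and Lemma \ref{r-snug} yields the $C_M \nu w^{d-1+\delta}$ term there. The region $S \setminus K$ --- which includes not only the slack $S\setminus S^*$ but also the entire $O(w)$-thick shell layer just inside $\partial S^*$ --- is handled separately, by partitioning it into $O(M_*/M)$ bricks of side $O(w)$ and subtracting the shell cubes $\tilde{\mathcal{C}}$ from each, producing the dominant $\frac{M_*}{M}w^{d-1+\delta}$ term. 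Your decomposition into $S\setminus S^*$ and $S^*\setminus\mathcal{C}$ misassigns this shell layer to the inner piece, where no snugness argument can control it; replacing $S^*$ with a $K$-type solid that follows the cubes' outer boundary is the missing idea.
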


We will prove this theorem in Section \ref{s-pack}. For now, we use it to derive the corollary we need in the proof of Proposition \ref{r-ind_pack}:

\begin{cor}\label{r-pack_cor}
Fix $\frac{1}{d} < t < \frac{1}{d-1}$ and $\delta$ depending on $t$, such that $0 < \delta < 1$ and $(d-1)t + \delta t < 1$. Choose a scale $M$ sufficiently large and choose $N_0$ sufficiently large depending on $M$. Suppose that $S$ is a brick satisfying the width bound $Mn_0^{-t} \leq w(S) \leq Mn_0^{-t} + O(n_0^{-t})$ for some $n_0 \geq N_0$ and satisfying the eccentricity bound $\mathrm{ecc}(S) = o(n_0)$. Then we can find $n_0' \geq n_0$ with $n_0' - n_0 \asymp \mathrm{ecc}(S) M^d$, such that $S$ can be perfectly packed by cubes of sidelength $n^{-t}$ for $n_0 \leq n < n_0'$ and a collection of bricks $\mathcal{B}$ satisfying the weighted surface area bound
    \[\mathrm{surf}_\delta (\mathcal{B}) \ll \frac{1}{M} \sum_{n=n_0}^{n_0'-1} \frac{1}{n^{(d-1)t+\delta t}} .\]
\end{cor}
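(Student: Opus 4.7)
The plan is to reduce directly to Theorem \ref{r-pack}. Set $w := n_0^{-t}$ and, for each $i \in \{1, \ldots, d\}$, define $M_i := \lfloor S_i / w \rfloor$, so that automatically $M_i w \leq S_i \leq M_i w + w$, matching the dimensional hypothesis of Theorem \ref{r-pack}. The width bound $Mn_0^{-t} \leq w(S) \leq Mn_0^{-t} + O(n_0^{-t})$ forces $M_1 = M + O(1) \asymp M$, and the ordering $M_1 \leq M_2 \leq \cdots \leq M_d$ is inherited from $S_1 \leq S_2 \leq \cdots \leq S_d$. Putting $M_* := M_1 M_2 \cdots M_d$ and $n_0' := n_0 + M_*$, a brief calculation gives
\[ M_* \asymp \frac{\mathrm{vol}(S)}{w^d} = \mathrm{ecc}(S) \left(\frac{w(S)}{w}\right)^d \asymp \mathrm{ecc}(S) M^d, \]
which is precisely the required relation $n_0' - n_0 \asymp \mathrm{ecc}(S) M^d$.

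Next I take $\mathcal{C}$ to be the family of cubes of sidelength $n^{-t}$ for $n_0 \leq n < n_0'$. These are $M_*$ cubes of maximum width $w = n_0^{-t}$, and their size discrepancy $\varepsilon := (n_0'-1)^t / n_0^t - 1$ tends to $0$ as $n_0 \to \infty$ because $n_0' - n_0 = M_* \ll \mathrm{ecc}(S) M^d = o(n_0) M^d$ with $M$ fixed. By taking $N_0$ sufficiently large in terms of $M$, I can thus make $\varepsilon$ as small as needed. Applying Theorem \ref{r-pack} packs $\mathcal{C}$ inside $S$ and partitions $S \setminus \mathcal{C}$ into a family $\mathcal{B}$ with
\[ \mathrm{surf}_\delta(\mathcal{B}) \ll \frac{M_*}{M_1} w^{d-1+\delta} + C_{M_1} \nu w^{d-1+\delta} \ll \frac{M_*}{M} w^{d-1+\delta} + C_M \nu w^{d-1+\delta}, \]
where $\nu \to 0$ as $\varepsilon \to 0$. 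Combining $\mathcal{C}$ with $\mathcal{B}$ then yields a perfect packing of $S$, so only the weighted-surface-area estimate remains to be reconciled with the target form.

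For that reconciliation, observe that each summand in $\sum_{n=n_0}^{n_0'-1} n^{-(d-1)t - \delta t}$ is within a factor $1+o(1)$ of $n_0^{-(d-1+\delta)t} = w^{d-1+\delta}$, by the same reason that makes $\varepsilon$ small; therefore the sum is $\asymp M_* w^{d-1+\delta}$. The main term of the bound thus matches $\frac{1}{M} \sum_{n=n_0}^{n_0'-1} n^{-(d-1)t-\delta t}$ up to absolute constants, and it only remains to absorb the error $C_M \nu w^{d-1+\delta}$ into this same quantity. Since $M_*/M \geq M^{d-1} \geq 1$ and $\nu \to 0$ as $n_0 \to \infty$ while $C_M$ depends only on $M$, we need only enlarge $N_0$ (in a manner permitted to depend on $M$) until $C_M \nu \leq M_*/M$. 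The only real obstacle is this last bit of bookkeeping, which is harmless precisely because $N_0$ is allowed to grow with $M$ so the $M$-dependent constant $C_M$ can be defeated.
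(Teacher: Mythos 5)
Your proof is correct and follows essentially the same route as the paper: both set $M_i = \lfloor S_i / n_0^{-t}\rfloor$, take $n_0' = n_0 + M_*$, observe $M_* \asymp \mathrm{ecc}(S)M^d$, apply Theorem \ref{r-pack} after noting the size discrepancy can be made small by enlarging $N_0$, and then rewrite $\frac{M_*}{M}w^{d-1+\delta}$ as $\frac{1}{M}\sum_{n=n_0}^{n_0'-1}n^{-(d-1)t-\delta t}$ up to constants. Your treatment of the error term $C_M \nu w^{d-1+\delta}$ (defeated by taking $N_0$ large depending on $M$) is the same absorption the paper performs, just spelled out a bit more explicitly.
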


\begin{proof}
Let the parameters be chosen as in the corollary, and use the notation $S= S_1 \times S_2 \times \dots \times S_d$ such that $S_1 \leq S_2 \leq \dots \leq S_d$. Thus, $M n_0^{-t} \leq S_1 \leq Mn_0^{-t} + O(n_0^{-t})$. Define $M_i = \lfloor S_i/ n_0^{-t} \rfloor$ for $i \in \{1,2,\dots, d\}$, so that $M_1 \asymp M$. Choose $n_0' = n_0 + M_*$. Note that
\begin{equation}\label{pack_cor-M}
    M_* \asymp \mathrm{ecc}(S) M^d,
\end{equation}
as required. Let $\mathcal{C}$ be the collection of cubes of sidelength $n^{-t}$ for $n_0 \leq n < n_0'$. The size discrepancy is $\frac{n_0'^t}{n_0^t} - 1$. This can be made arbitrarily small as long as $N_0$ is chosen to be sufficiently large compared with $M$. This makes the second term in the bound of Theorem \ref{r-pack} negligible with respect to the first. Thus, we can apply Theorem \ref{r-pack} to get a packing of $\mathcal{C}$ in $S$ such that $S \setminus \mathcal{C}$ can be partitioned into bricks $\mathcal{B}$ satisfying
    \[\mathrm{surf}_\delta(\mathcal{B}) \ll \frac{M_*}{M} (n_0^{-t})^{d-1+\delta} \ll \frac{M_*}{M} \frac{(1 + \mathrm{sd}(\mathcal{C}))^{d-1+\delta}}{(n_0')^{(d-1)t+\delta t}} \ll \frac{1}{M} \sum_{n=n_0}^{n_0'-1} \frac{1}{n^{(d-1)t+\delta t}}, \]
since $\mathrm{sd}(\mathcal{C}) \to 0$. This completes the proof.
\end{proof}

Observe that the power of $M$ in the weighted surface area bound of the corollary is independent of $\delta$. This fact allows us to loosen our weighted surface area bound (\ref{ind_pack-surf}) by a factor of $M^{\delta / 2}$, which is enough to let us complete the inductive step of the proof of Proposition \ref{r-ind_pack}, illustrating the advantage of working with weighted surface area (see also the discussion after Lemma \ref{r-brick_width}).

We now use this corollary to prove Proposition \ref{r-ind_pack}. Our proof closely mirrors the proof of Proposition 2.1 in \cite{tao-pack} except for higher dimensions. However, for the reader's convenience, we include it here.
\begin{proof}[Proof of Proposition \ref{r-ind_pack}]
We prove this via downward induction on $n_0$. Fix $n_{\mathrm{max}}\geq N_0$. Clearly, the result holds if $n_0= n_{\mathrm{max}}$. Fix some $n_0 \leq n_{\mathrm{max}}$, and assume the result holds with $n_0$ replaced by any strictly larger integer up to $n_{\mathrm{max}}$. We show that the result will then hold for $n_0$.

Since $t > 1/d$, (\ref{ind_pack-vol}) implies that $\mathrm{vol}(\mathcal{B}) \asymp n_0^{1-dt}$. Furthermore, since $(d-1)t+ \delta t < 1$, we have $\mathrm{surf}_\delta(\mathcal{B}) \ll M^{-(1-\delta/2)} n_0^{1-(d-1)t-\delta t}$. Thus, Lemma \ref{r-brick_width} implies the existence of a brick $B' \in \mathcal{B}$ satisfying
    \[w(B') \gg \bigg( \frac{n_0^{1-dt}}{M^{-(1-\delta/2)} n_0^{1-(d-1)t-\delta t}} \bigg)^{\frac{1}{1-\delta}} = M^{\frac{1-\delta/2}{1-\delta}} n_0^{-t}.\]
Since $\frac{1-\delta/2}{1-\delta} > 1$ for $0 < \delta < 1$, then as long as we take $M$ sufficiently large, we can drop the implied constant and conclude that $w(B') \geq Mn_0^{-t}$. Partition $B'$ into two bricks $B$ and $B' \setminus B$, so that $M n_0^{-t} \leq w(B) \leq M n_0^{-t} + O(n_0^{-t})$. By the height bound, (\ref{ind_pack-height}), $\mathrm{ecc}(B) \ll M^{-(d-1)} n_0^{(d-1)t} = o( n_0)$, which means that we can apply Corollary \ref{r-pack_cor} to pack $B$ by cubes of sidelengths $n^{-t}$ for $n_0 \leq n < n_0'$ with $n_0' - n_0 \gg M^d$ and a collection of bricks $\mathcal{B}_0$ satisfying
\begin{equation}\label{ind_pack-B_0_surf}
    \mathrm{surf}_\delta (\mathcal{B}_0) \ll \frac{1}{M} \sum_{n=n_0}^{n_0'-1} \frac{1}{n^{(d-1)t+\delta t}} .
\end{equation}
Now, if $n_0' \geq n_{\mathrm{max}}$, then we are done, as we have packed every cube of sidelengths $n^{-t}$ for $n_0 \leq n < n_{\mathrm{max}}$. Otherwise, suppose that $n_0' < n_{\mathrm{max}}$. Since $n_0'$ is strictly larger than $n_0$, it makes sense to now apply our inductive hypothesis, replacing $n_0$ by $n_0'$ (which is strictly larger than $n_0$), and replacing $\mathcal{B}$ by $\mathcal{B}' = (\mathcal{B} \setminus \{B'\}) \cup \{B' \setminus B\} \cup \mathcal{B}_0$. First, however we have to check to assure that the conditions of the proposition are met. Observe,
    \[\mathrm{vol}(\mathcal{B}') = \sum_{n=n_0}^\infty \frac{1}{n^{dt}} - \sum_{n=n_0}^{n_0'-1} \frac{1}{n^{dt}} = \sum_{n=n_0'}^\infty \frac{1}{n^{dt}},\]
and so $\mathcal{B}'$ has the required total volume (\ref{ind_pack-vol}). Clearly, $\mathcal{B}'$ satisfies the height bound (\ref{ind_pack-height}). Finally, by (\ref{ind_pack-B_0_surf}), (\ref{ind_pack-surf}), and the fact that $\mathrm{surf}_\delta \{ B' \setminus B \} \leq \mathrm{surf}_\delta \{ B' \}$, we have
    \[\mathrm{surf}_\delta(\mathcal{B}') \leq \mathrm{surf}_\delta (\mathcal{B} \cup \mathcal{B}_0) \ll \frac{1}{M^{1-\delta/2}} \sum_{n=1}^{n_0-1} \frac{1}{n^{(d-1)t+\delta t}} + \frac{1}{M} \sum_{n=n_0}^{n_0'-1} \frac{1}{n^{(d-1)t+\delta t}} \ll \frac{1}{M^{1-\delta/2}} \sum_{n=1}^{n_0'-1} \frac{1}{n^{(d-1)t+\delta t}},\]
and thus $\mathcal{B}'$ satisfies the weighted surface bound (\ref{ind_pack-surf}). Thus, we can apply the inductive hypothesis for $n_0'$ and pack $\bigcup_{B \in \mathcal{B}'} B$ by the remaining cubes of sidelength $n^{-t}$ for $n_0' \leq n < n_{\mathrm{max}}$, which in turn implies that we can pack $\bigcup_{B \in \mathcal{B}} B$ by cubes of sidelength $n^{-t}$ for $n_0 \leq n < n_{\mathrm{max}}$.
\end{proof}

All that remains is proving  Theorem \ref{r-pack} which provides a general and efficient brick-packing algorithm.

\section{Efficient Brick-Packing Algorithm}\label{s-pack}
\begin{proof}[Proof of Theorem \ref{r-pack}]
Note that, without loss of generality, we can assume that $S_i = M_i w$ for $i \in \{1,2,\dots, d\}$. To see this, suppose that $S'$ is a cube that contains $S$ and instead satisfies $M_iw \leq S_i' \leq M_iw + O(w)$. Then, $S' \setminus S$ can be partitioned into a $O(1)$ bricks, each of which contributes an allowable weighted surface area $\ll \frac{M_*}{M} w^{d-1+\delta}$.

To explicitly define our packing, we position $S$ in $\R^d$ as
    \[[0, M_1w] \times [0, M_2w] \times \dots \times [0, M_d w].\]
Index $\mathcal{C}$ as $\{C_n\}_{n=0}^{M_*-1}$ from largest width to smallest width. We use the notation $w_n := w(C_n)$. By construction, $w_n \leq w_m$ if and only if $n \geq m$. We further index $n=0,1, \dots, M_* - 1$ by $n_{\vec{i}} = n_{i_1,i_2,\dots, i_d}$, where
    \[n_{i_1,i_2,\dots, i_d} := i_1 + i_2 M_1 + i_3 M_1 M_2 + \dots + i_d M_1 M_2 \dots M_{d-1},\]
for $i_k = 0, \dots, M_k-1$ (with $k=1,2,\dots, d$). We use the notation $C_{\vec{i}} := C_{n_{\vec{i}}}$ and $w_{\vec{i}} := w_{n_{\vec{i}}}$. Position each $C_{\vec{i}}$ in $S$ as
    \[C_{\vec{i}} := [x^1_{\vec{i}}, x^1_{\vec{i}} + w_{\vec{i}}] \times [x^2_{\vec{i}}, x^2_{\vec{i}} + w_{\vec{i}}] \times \dots \times [x^d_{\vec{i}}, x^d_{\vec{i}} + w_{\vec{i}}],\]
where for any $k \in \{1,2, \dots, d\}$, we define
    \[x^{k}_{\vec{i}} = \sum_{i_{k}' = 0}^{M_{k} - 1} w_{i_1,\dots, i_{ k- 1}, i_{k}', 0, \dots, 0} - \sum_{i_{k}' = i_{k}} ^{M_{k}-1} w_{i_1,\dots, i_{k - 1},  i_{k}', i_{k} +1, \dots i_d} \]
(see Figure \ref{fig-C_i}).
\begin{figure}[t]
    \includegraphics[width=.6\textwidth]{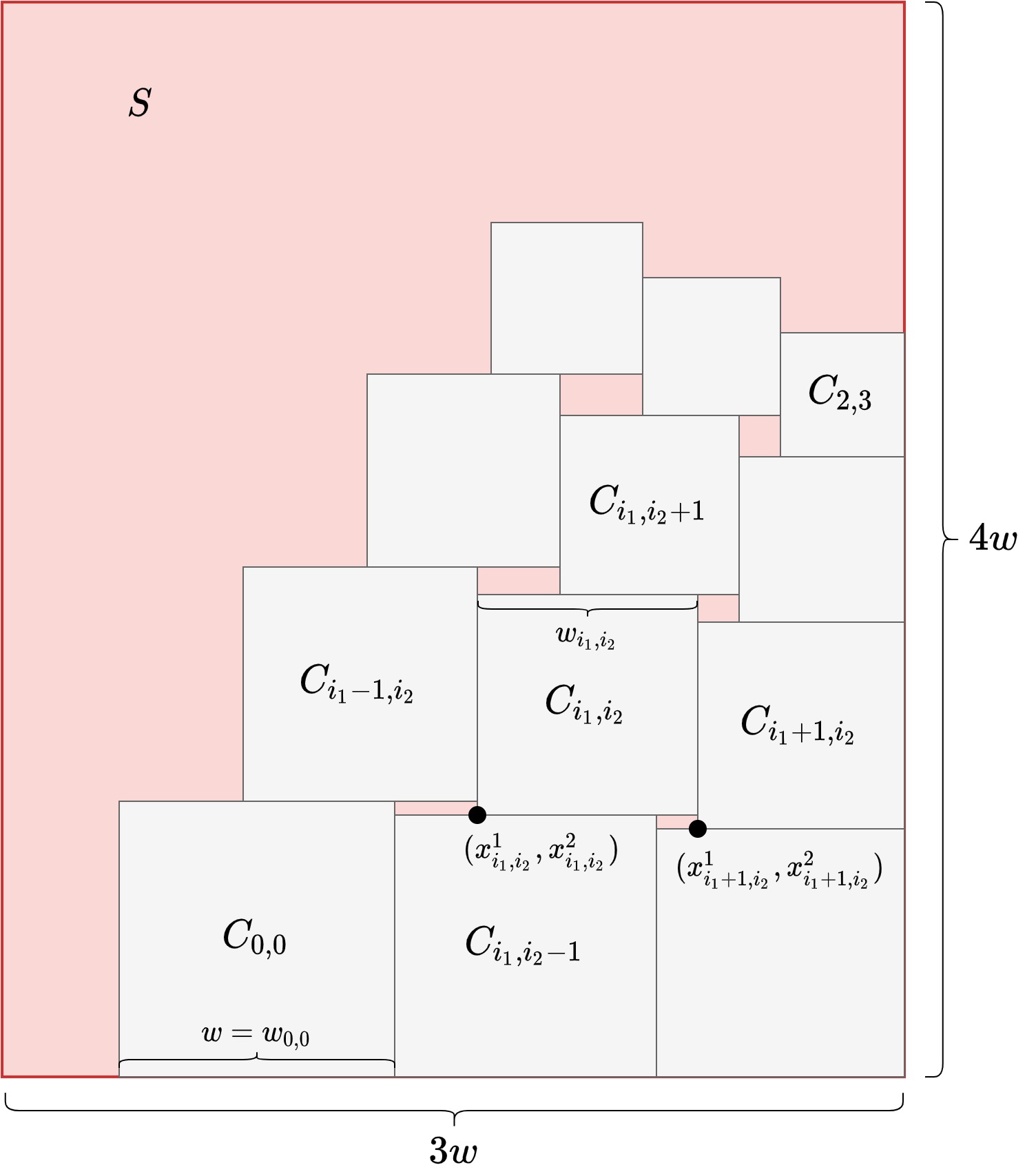}
    \centering
    \caption{The packing of the cubes $C_{\vec{i}} $ in $S$. Here, $d=2$, $M_1 = 3$, $M_2 = 4$, $M_* = 12$, and $i_1 = i_2 = 1$. Note that the diagram is not to scale.}
    \label{fig-C_i}
\end{figure}

We will verify that this is a legal packing shortly. Note that each $(x_{\vec{i}}^1, x_{\vec{i}}^2, \dots, x_{\vec{i}}^d)$ is asymptotically fixed at a lattice point as $\mathrm{sd}(\mathcal{C}) \ll \varepsilon$, namely
\begin{equation}\label{pack-lattice}
    (x_{\vec{i}}^1, x_{\vec{i}}^2, \dots, x_{\vec{i}}^d) = (wi_1,wi_2, \dots, wi_d) + O_M(\varepsilon w).
\end{equation}

Collect the subset of cubes from $\mathcal{C}$ which form its exterior "shell":
    \[\Tilde{\mathcal{C}} = \{C_{\vec{i}} \in \mathcal{C} : i_k =0 \text{ or } i_k = M_k - 1, \, \text{for some $k\in \{1, 2, \dots, d\}$}\}.\]
Let $B$ be the smallest brick containing $\mathcal{C} \setminus \Tilde{\mathcal{C}}$. By (\ref{pack-lattice}), $B$ has dimensions
    \[[w, (M_1-1) w] \times [w, (M_2-1) w] \times \dots \times [w, (M_d-1) w] + o(1).\]
    
\begin{figure}[t]
    \includegraphics[width=.8\textwidth]{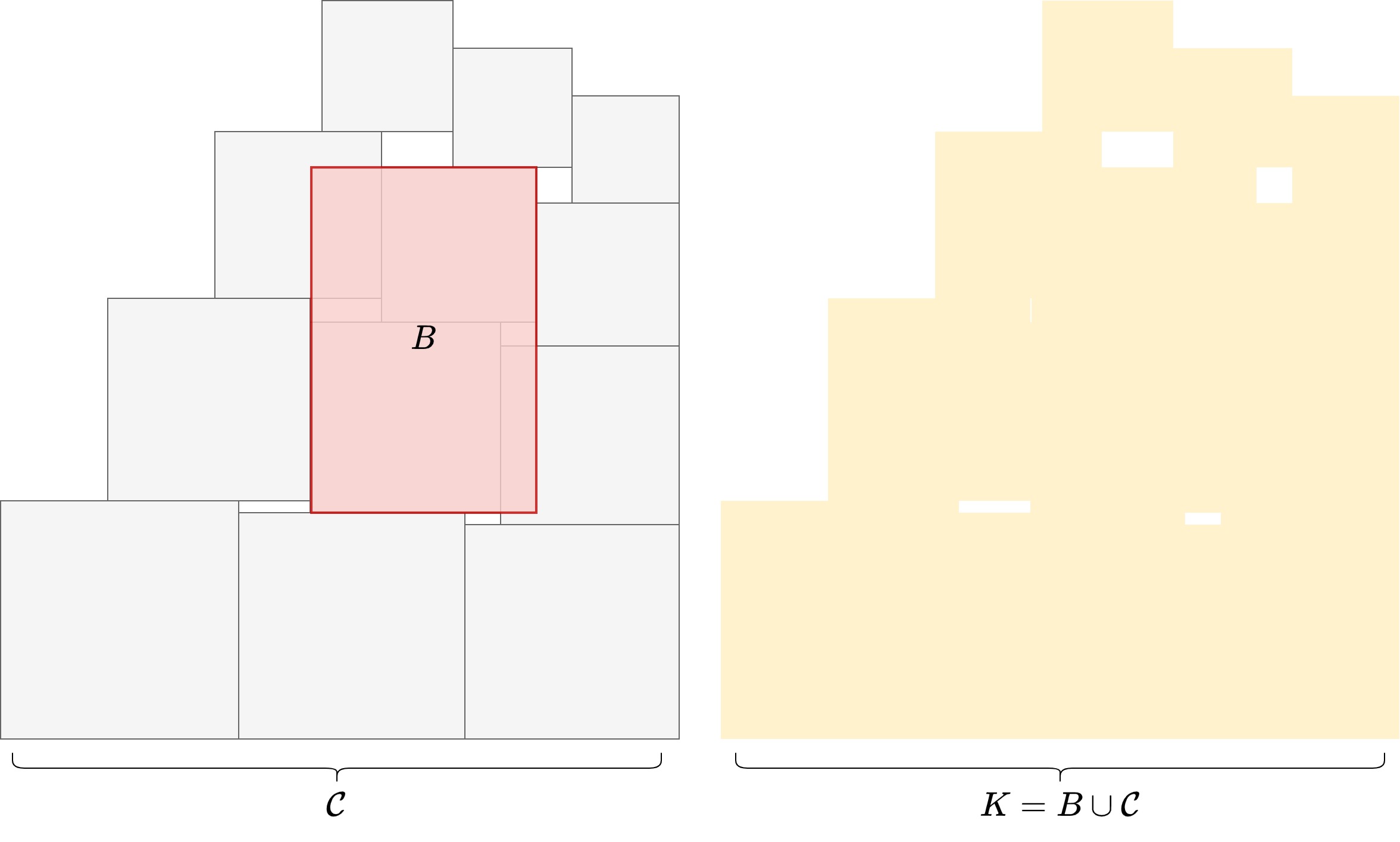}
    \centering
    \caption{The simple solid $K$ is constructed from $B$ and $\mathcal{C}$.}
    \label{fig-k}
\end{figure}

Define the simple solid $K =  B \cup \mathcal{C} = B \cup \Tilde{\mathcal{C}}$ (see Figure \ref{fig-k}). Observe that
    \[S \setminus K = (S \setminus B) \setminus \Tilde{\mathcal{C}}.\]
Observe that $S \setminus B$ can be partitioned into $O(M_*/ M)$ bricks $\mathcal{B}'$ each with dimensions $O(w)$. Each brick $B' \in \mathcal{B}'$ intersects at most $O(1)$ cubes in $\Tilde{\mathcal{C}}$. This means that we can partition $B' \setminus \Tilde{\mathcal{C}}$ into $O(1)$ bricks, each with weighted surface area less than that of $B'$, which is $\ll w^{d-1+\delta}$. Thus, $S \setminus K$ can be partitioned into bricks with allowable weighted surface area $\ll \frac{M_*}{M} w^{d-1+\delta}$. It then suffices to show that $K \setminus \mathcal{C}$ can be partitioned into bricks with weighted surface area $\ll \frac{M_*}{M} w^{d-1+\delta}$.

By Lemma \ref{r-snug}, it suffices to show that $\mathcal{C}$ is a packing that is $O_M(\varepsilon)$-snug in $K$. First, observe that all of the cubes are inside of $S$. This follows from the bound $w_{\vec{i}} \leq w$. Thus, we have to check that none of the cubes' interiors overlap, and that every cube in $\mathcal{C} \setminus \Tilde{\mathcal{C}}$ is touching the $2d$ adjacent cubes (the cubes in $\Tilde{\mathcal{C}}$ are already touching $K$ by construction).

Define $\pi_k$ to be the projection operator onto the $x_k$ axis for every $k \in \{1,2, \dots, d\}$. Let $\Tilde{E}$ be the collection of $2^d - 1$ vectors $\vec{e} = (e_1,e_2, \dots, e_d)$ such that every $e_k \in \{0,1\}$ but $\vec{e} \neq \vec{0}$. Let $\vec{e}_k$ be the $k$th unit vector, namely $(0,\dots,0,1,0,\dots, 0)$, with a $1$ in the $k$th component and let $E \subset \Tilde{E}$ be the collection of such $d$ unit vectors. Define $I$ be the collection of $\vec{i} = (i_1, i_2, \dots, i_d)$ such that $i_k \in \{0,1,\dots, M_k -2 \}$ for $k \in \{1,2, \dots, d\}$. By symmetry and the asymptotic positioning of the cubes (\ref{pack-lattice}), we only have to worry about checking overlap on "adjacent" cubes, reducing the proof to showing the following two claims:
\begin{enumerate}[(i)]
    \item Let $\vec{e} \in E$ and $\vec{i} \in I$. Then for at least one $k \in \{1, 2, \dots, d\}$ we have that $\pi_k(C_{\vec{i}}) \cap \pi_k(C_{\vec{i} + \vec{e}}) $ is \textit{exactly} one point.
    \item Let $\vec{e} \in \Tilde{E} \setminus E$ and $\vec{i} \in I$. Then for at least one $k \in \{1, 2, \dots, d\}$, we have that $\pi_k(C_{\vec{i}}) \cap \pi_k(C_{\vec{i} + \vec{e}}) $ is \textit{at most} one point.
\end{enumerate}
To see why this is sufficient to complete the proof, observe that as long as the boundary of the cubes are touching, the asymptotic positioning of the cubes (\ref{pack-lattice}) ensures that the non-overlapping boundary will have area $O_M(\varepsilon w)$, meaning that (i) will imply that the packing is $O_M(\varepsilon)$-snug. Clearly, (ii) implies that none of the cubes' interiors overlap, and thus our packing of $\mathcal{C}$ is valid.

To see (i), note that the construction of the $C_i$ immediately implies that for every $k \in \{1, 2, \dots, d\}$, we have
    \[\pi_k( C_{\vec{i} + \vec{e}_k}) \cap \pi_k (C_{\vec{i}}) = \{ x_{\vec{i}}^k + w_{\vec{i}}\}. \]

Now we show (ii). Fix some $\vec{e} = (e_1, e_2, \dots, e_d) \in \Tilde{E} \setminus E$. Let $k$ be the smallest index such that the component $e_k$ is nonzero. Clearly, $k \in \{1, 2, \dots, d-1\}$. Recall that
    \[x^{k}_{\vec{i}} = \sum_{i_{k}' = 0}^{M_{k} - 1} w_{i_1,\dots, i_{ k- 1}, i_{k}', 0, \dots, 0} - \sum_{i_{k}' = i_{k}} ^{M_{k}-1} w_{i_1,\dots, i_{k - 1},  i_{k}', i_{k} +1, \dots i_d}. \]
By the ordering of $w_{\vec{i}}$, we have that $w_{\vec{i} + \vec{e}} \leq w_{\vec{i} + \vec{e}_k}$. Thus, $x_{\vec{i} + \vec{e}} \geq x_{\vec{i} + \vec{e}_k} = x_{\vec{i}} + w_{\vec{i}}$. This shows that $\pi_k (C_{\vec{i} + \vec{e}}) \cap \pi_k (C_{\vec{i}})$ is at most a singleton, as desired. This completes the proof.
\end{proof}

\bibliography{references}

\end{document}